\newtheorem{theorem}{Theorem}[section]
\newtheorem{lemma}[theorem]{Lemma}
\newtheorem{assumption}[theorem]{Assumption}
\theoremstyle{remark}
\newtheorem{remark}[theorem]{Remark}
\newtheorem{example}[theorem]{Example}
\DeclareMathOperator{\prox}{prox}
\DeclareMathOperator{\zer}{zer}
\DeclareMathOperator{\Fix}{Fix}
\DeclareMathOperator{\range}{range}
\newcommand{\Hilbert}{\mathcal{H}}
\newcommand{\setto}{\rightrightarrows}
\DeclareMathOperator{\Id}{Id}
\DeclareMathOperator*{\argmin}{arg\,min}
\newcommand{\bz}{\mathbf{z}}
\newcommand{\bx}{\mathbf{x}}
\newcommand{\by}{\mathbf{y}}
\newcommand{\bv}{\mathbf{v}}
\newcommand{\bW}{\mathbf{W}}
\newcommand{\bS}{\mathbf{S}}
\newcommand{\bL}{\mathbf{L}}
\newcommand{\bM}{\mathbf{M}}
\newcommand{\bF}{\mathbf{F}}
\newcommand{\bN}{\mathbf{N}}
\title{Frugal and Decentralised Resolvent Splittings\\ Defined by Nonexpansive Operators}
\author{Matthew K. Tam\thanks{School of Mathematics \& Statistics,
                             The University of Melbourne,
                             Parkville VIC 3010, Australia.
                             Email:~\href{href:matthew.tam@unimelb.edu.au}
                                         {matthew.tam@unimelb.edu.au}}
}
\begin{document}
\maketitle

\begin{abstract}
Frugal resolvent splittings are a class of fixed point algorithms for finding a zero in the sum of the sum of finitely many set-valued monotone operators, where the fixed point operator uses only vector addition, scalar multiplication and the resolvent of each monotone operator once per iteration. In the literature, the convergence analyses of these schemes are performed in an inefficient, algorithm-by-algorithm basis. In this work, we address this by developing a general framework for frugal resolvent splitting which simultaneously covers and extends several important schemes in the literature. The framework also yields a new resolvent splitting algorithm which is suitable for decentralised implementation on regular networks.
\end{abstract}

\paragraph{MSC2010.} 47H05 $\cdot$ 65K10 $\cdot$ 90C30
\paragraph{Keywords.} maximal monotonicity $\cdot$ resolvent splitting $\cdot$ decentralised algorithms

\section{Introduction and Preliminaries}
Throughout this work, $\Hilbert$ denotes a real Hilbert space with inner-product $\langle\cdot,\cdot\rangle$ and induced norm~$\|\cdot\|$. We are interested in solving the \emph{monotone inclusion} given by
\begin{equation}\label{eq:zero}
  \text{find~}x\in\left(\sum_{i=1}^nF_i\right)^{-1}(0) \subseteq \Hilbert,
\end{equation}
where $F_1,\dots,F_n\colon\Hilbert\setto\Hilbert$ are (set-valued) \emph{maximally monotone} operators~\cite{bauschke2017convex,ryu2022large}. Recall that $F_i$ is said to be maximally monotone if it is \emph{monotone} in the sense
 $$ \langle x-\bar{x},y-\bar{y}\rangle \geq 0 \quad\forall y\in F_i(x),\,\forall\bar{y}\in F_i(\bar{x}) $$
and its graph contains no proper monotone extensions. The abstraction given by \eqref{eq:zero} covers many fundamental problems arising in mathematical optimisation involving objectives with finite-sum structure. Most notable are the following two types of  examples:
\begin{example}[nonsmooth minimisation]\label{ex:sum min}
Let $f_1,\dots,f_n\colon\Hilbert\to(-\infty,+\infty]$ be proper, lsc, convex functions. Consider the finite-sum nonsmooth minimisation problem given by
\begin{equation}\label{eq:sum min}
\min_{x\in\Hilbert}\sum_{i=1}^nf_i(x). 
\end{equation}
Assuming sufficient regularity so that the sum-rule holds, the first-order optimality condition for \eqref{eq:sum min} is given by the inclusion \eqref{eq:zero} with $F_i=\partial f_i$. Note that here ``$\partial$'' denotes the \emph{subdifferential} in the sense of convex analysis.
\end{example}
\begin{example}[nonsmooth min-max]\label{e:sum minmax}
 Let $\phi_1,\dots,\phi_n\colon\Hilbert_1\times\Hilbert_2\to [-\infty,+\infty]$ be proper, saddle-functions~\cite{rockafellar1970monotone}. That is, $\phi_i$ is convex-concave and there exists a point $(u,v)\in\Hilbert_1\times\Hilbert_2$ such that $\phi_i(\cdot,v)<+\infty$ and $\phi_i(u,\cdot)>-\infty$.  Consider the finite-sum nonsmooth minmax problem given by
\begin{equation}\label{eq:sum minmax}
\min_{u\in\Hilbert_1}\max_{v\in\Hilbert_2}\sum_{i=1}^n\phi_i(u,v) 
\end{equation}
Assuming sufficient regularity so that the sum-rule holds, the first-order optimality condition for \eqref{eq:sum minmax} is given by the inclusion \eqref{eq:zero} with $\Hilbert:=\Hilbert_1\times\Hilbert_2$, $x=(u,v)$ and $F_i:=\binom{\partial_u\phi_i}{\partial_v(-\phi_i)}$. Here ``$\partial_u$'' (resp.\ $\partial_v$) denotes the \emph{partial subdifferential} with respect to the variable $u$ (resp.\ $v$).
\end{example}

\paragraph{Frugal resolvent splittings.}
In this work, we consider \emph{frugal resolvent splitting} for solving~\eqref{eq:zero}.  Roughly speaking, a fixed point algorithm, defined by an operator $T\colon\Hilbert^d\to\Hilbert^d$, for solving \eqref{eq:zero} is a \emph{resolvent splitting} if it can be described using only vector addition, scalar multiplication, and the resolvents $J_{F_1}:=(\Id+F_1)^{-1},\dots,J_{F_n}:=(\Id+F_n)^{-1}$. A resolvent splitting is \emph{frugal} if it uses each of the resolvents only once per iteration. For details and extensions of these notions, see~\cite{ryu2020uniqueness,malitsky2021resolvent,morin2022frugal,aragon2022primal}.

In recent times, there has been interest in \emph{minimal} frugal resolvent splittings. As a consequence of~\cite[Theorem~3.3]{malitsky2021resolvent}, these are precisely the frugal resolvent splittings such that the operator $T\colon\Hilbert^d\to\Hilbert^d$ has $d=n-1$. In the case $n=2$, the \emph{Douglas--Rachford algorithm} is a minimal frugal resolvent splitting which is defined by $T\colon\Hilbert\to\Hilbert$ where
 $$ T(z) = z+\gamma\left(J_{F_2}\bigl(2J_{F_1}(z)-z\bigr)-J_{F_1}(z)\right). $$
For $n=3$, \emph{Ryu's splitting algorithm} \cite{aragon2021strengthened,ryu2020uniqueness} is a minimal resolvent splitting which is defined by $T\colon\Hilbert^2\to\Hilbert^2$ where
\begin{equation}\label{eq:ryu3 intro}
 T(\bz) = \bz + \gamma\begin{pmatrix}J_{F_3}\bigl(J_{F_1}(z_1)-z_1+J_{F_2}\bigl(J_{F_1}(z_1)+z_2\bigr)-z_2\bigr)-J_{F_1}(z_1)\\ J_{F_3}\bigl(J_{F_1}(z_1)-z_1+J_{F_2}\bigl(J_{F_1}(z_1)+z_2\bigr)-z_2\bigr)-J_{F_2}\bigl(J_{F_1}(z_1)+z_2\bigr)\end{pmatrix}. 
\end{equation}
As observed in \cite[Remark~4.7]{malitsky2021resolvent}, attempts to extends Ryu's splitting algorithm to $n>3$ operators have been so-far unsuccessful. To address this for $n\geq 3$, Malitsky \& Tam \cite{malitsky2021resolvent} introduced the minimal frugal resolvent splitting defined by $T_A\colon\Hilbert^{n-1}\to\Hilbert^{n-1}$ where
$$T(\bz)
:=  \bz +
\gamma\begin{pmatrix}
x_2-x_1 \\
x_3-x_2 \\
\vdots \\
x_{n}-x_{n-1} \\
\end{pmatrix}\text{~where~} \begin{cases}
    x_1=J_{F_1}(z_1),  \\
    x_i=J_{F_i}(z_i+x_{i-1}-z_{i-1}) \quad \forall i\in\{2,\dots,n-1\}, \\
    x_n=J_{F_n}(x_1+x_{n-1}-z_{n-1}). 
   \end{cases}
$$
Despite many similarities in the convergence proofs of the above algorithms (\emph{e.g.,} all of the underlying operators are nonexpansive), their convergence analyses are performed in an inefficient, algorithm-by-algorithm basis in the literature. In this work, we address this by providing a general framework for convergence of (not necessarily minimal) frugal resolvent splittings which simultaneously covers all of the aforementioned algorithm. This framework reduces nonexpansiveness of the underlying operator $T$ to a question of negative definiteness of a certain matrix defined by algorithm coefficients. In addition to significantly simplifying the proofs of \cite{ryu2020uniqueness,malitsky2021resolvent}, the framework  also suggests a convergent extension of Ryu's splitting algorithm for $n>3$ operators. More precisely, it can be used to show that the operator $T\colon\Hilbert^{n-1}\to\Hilbert^{n-1}$ is nonexpansive where
\begin{equation}\label{eq:ryun intro}
T(\bz)
=  \bz +
\gamma\sqrt{\frac{2}{n-1}}\begin{pmatrix}
x_n-x_1 \\
x_n-x_2 \\
\vdots \\
x_{n}-x_{n-1} \\
\end{pmatrix}
\text{~where~}
 \begin{cases}
    x_i=J_{F_i}\bigl(\sqrt{\frac{2}{n-1}}\,z_i+\frac{2}{n-1}\displaystyle\sum_{j=1}^{i-1}x_j\bigr) \quad \forall i\in\{1,\dots,n-1\} \\
    x_n=J_{F_n}\bigl(\frac{2}{n-1}{\displaystyle\sum_{j=1}^{n-1}x_i}-\sqrt{\frac{2}{n-1}}\displaystyle\sum_{i=1}^{n-1}z_i\bigr). 
   \end{cases}
\end{equation}
Note that \eqref{eq:ryun intro} coincides with \eqref{eq:ryu3 intro} when $n=3$.

\paragraph{Decentralised algorithms.}
Another focus  of this work is on \emph{distributed algorithms}~\cite{bertsekas2015parallel} for solving~\eqref{eq:zero} under the following conditions:
\begin{enumerate}[(a)]
\item Each operator $F_i$ in \eqref{eq:zero} is known by single node in a graph $G=(V,E)$ with $|V|=n$. More precisely, $F_i$ is known by the $i$th node.
\item Each node has its own local variables, and can compute standard arithmetic operations as well as the \emph{resolvent} of its monotone operator $F_i$ given by $J_{F_i}:=(\Id+F_i)^{-1}$ (see \cite[Section~23]{bauschke2017convex}).
\item Two nodes in $G$ can communicate the results of their computations directly with one another only if they are connected by an edge in $G$.
\item The algorithm is ``decentralised'' in the sense that its does not necessarily require the computation of a global sum across all local variables of the nodes.
\end{enumerate}

Stated at the abstraction of monotone inclusions, the classical way to solve \eqref{eq:zero} (without distributed considerations) involves using the \emph{Douglas--Rachford algorithm} applied to a reformulation in the product-space $\Hilbert^n$ (see, for instance, \cite[Example~2.7]{malitsky2021resolvent} or \cite[Proposition~26.12]{bauschke2017convex}). Given an initial point $\bz^0=(z_1^0,\dots,z_n^n)\in\Hilbert^n$ and a constant $\gamma\in(0,2)$, this approach gives rise to the iteration given by
\begin{equation}\label{eq:dr product}
\left\{\begin{aligned}
  x^k       &= \frac{1}{n}\sum_{j=1}^nz^k_j \\
  z^{k+1}_i &= z^k_i + \gamma\bigl(J_{F_i}\bigl(2x^k -z^k_i\bigr)-x^k\bigr) &&\forall i\in\{1,\dots,n\}.
\end{aligned}\right.
\end{equation}
In terms of distributed computing, this iteration requires the global sum across the local variables $z_1^k,\dots,z_n^k$ to compute the aggregate $x^k$ and so it does not satisfy requirements (c) or (d) above. On the other hand, the $z_i$-update in \eqref{eq:dr product} does satisfy properties (a) and (b), assuming it is computed by node $i$. For recent variations on \eqref{eq:dr product}, the reader is referred to \cite{campoy2022product,condat2019proximal}.

In order to describe other algorithms in the literature, it is convenient to introduce some notation. Given a matrix $W=(w_{ij})\in\mathbb{R}^{p\times l}$, we denote $\bW:=W\otimes \Id$ were $\Id$ denotes the identity operator on $\Hilbert$ and $\otimes$ denotes the Kronecker product. In other words, $\bW$ can be identified with the bounded linear operator from $\Hilbert^l$ to $\Hilbert^p$ given by
 $$ \bW = W\otimes \Id = \begin{bmatrix}
                          w_{11}\Id & w_{12}\Id & \dots & w_{1l}\Id \\
                          w_{21}\Id & w_{22}\Id & \dots & w_{2l}\Id \\                          
                          \vdots & \vdots & \ddots & \vdots \\
                          w_{p1}\Id & w_{p2}\Id & \dots & w_{pl}\Id \\                          
                         \end{bmatrix}.$$
Given maximally monotone operators $F_1,\dots,F_n\colon\Hilbert^n\setto\Hilbert$, we denote the maximally monotone operator $\bF\colon\Hilbert^n\to\Hilbert^n$ by
 $ \bF(\bx) := \bigl( F_1(x_1),\dots,F_n(x_n) \bigr)$ for all $\bx=(x_1,\dots,x_n)\in\Hilbert^n.$ Consequently, its resolvent $J_{\bF}\colon\Hilbert^n\to\Hilbert^n$ is given by  $J_{\bF} = \bigl( J_{F_1},\dots,J_{F_n} \bigr). $ Recall also that, when $F_i=\partial f_i$ where of $f_i$ is proper, lsc and convex, then the resolvent coincides with \emph{proximity operator} \cite[Section~24]{bauschke2017convex} given 
 $$ J_{F_i}=\prox_{f_i}=\argmin_{y\in\Hilbert}\left\{f_i(y)+\frac{1}{2}\|\cdot-y\|^2\right\}. $$                      
                    
Now, in the special case of Example~\ref{ex:sum min} where $J_{\partial f_i}=\prox_{f_i}$, the \emph{proximal exact first-order algorithm (P-EXTRA)}~\cite{shi2015proximal} can be used to solve \eqref{eq:zero}. Given $\bx^0\in\Hilbert^n$ and initialising with $\by^0=\bW\bx^0$ and $\bx^1=\prox_{\alpha f}(\by^0)$, this method iterates according to
\begin{equation}\label{eq:extra}
\left\{\begin{aligned}
   \by^{k} &= \bW\bx^k+\by^{k-1}-\widetilde{\bW}\bx^{k-1} \\
   \bx^{k+1}           &= \prox_{\alpha f}(\by^{k}). 
  \end{aligned}\right. 
\end{equation}
where $W,\widetilde{W}\in\mathbb{R}^{n\times n}$ are ``mixing matrices'' satisfying (c)  that describe the communication between nodes. For specific examples, the reader is referred to \cite[Assumption~1]{shi2015proximal}. This method satisfies conditions (a)-(d) in the minimisation setting with $x_i,y_i,z_i$ being the local variables of the $i$th node.

Let $L\in\mathbb{R}^{n\times n}$ denote the \emph{graph Laplacian} of $G=(V,E)$ \cite{chung1996lectures} where $|V|=n$. For the general monotone inclusion~\eqref{eq:zero}, the \emph{primal dual hybrid gradient (PDHG) algorithm} \cite{chambolle2011first,condat2013primal} can be used. After a change of variables (see  \cite[Section~5.1]{malitsky2021resolvent}), the PDHG can be expressed as
\begin{equation}\label{eq:pdhg}
    \left\{\begin{aligned}
     \bx^{k+1} &= J_{\tau\bF}\big(\bx^k-\tau\bv^k) \\
     \bv^{k+1} &= \bv^k + \sigma L\bigl(2\bx^{k+1}-\bx^{k}\bigr) \\
    \end{aligned}\right.
\end{equation}    
where the stepsizes are required to satisfy $\tau\sigma\|L\|\leq 1$. This method satisfies conditions (a)-(d) with $x_i,v_i$ being the variables local to the $i$th node.

Using our framework for convergence of frugal resolvent splittings, this work derives new fixed point algorithms for \eqref{eq:zero} which are suitable to decentralised implementation, as outlined in conditions (a)-(d). Given $\gamma\in(0,1)$ and $\tau:=\frac{n}{|E|}$, the most similar realisation of our approach to~\eqref{eq:pdhg} (see Example~\ref{ex:regular}) is given by
\begin{equation}\label{eq:d reg full}
\left\{ \begin{aligned}
  x^k_i &= J_{F_i}\bigl(v^k_i+\tau\sum_{j=1}^{i-1}A_{ij}x_j^k\bigr) \quad\forall i\in\{1,\dots,n\} \\
  v^{k+1}_i &= v^k_i-\gamma\tau\sum_{j=1}^nL_{ij}x^k_j
     \end{aligned}\right. 
\end{equation}     
where $A\in\mathbb{R}^{n\times n}$ denotes the adjacency matrix of the graph $G$. Note that this method satisfies conditions (a)-(d) with $x_i,v_i$ be the variable local to the $i$th node. By letting $N\in\mathbb{R}^{n\times n}$ denote the lower triangular part of $A$, the iteration \eqref{eq:d reg full} can be compactly expressed as
\begin{equation}\label{eq:d reg alg}
 \left\{\begin{aligned}
 \bx^k &=J_{\bF}(\bv^k+\tau\bN\bx^k) \\
 \bv^{k+1} &= \bv^k-\gamma\tau\bL\bx^k.
 \end{aligned}\right.
\end{equation}
Here we note that $\bx^k$ is well-defined due to the fact that $N$ is lower triangular with zeros on the diagonal. A feature of \eqref{eq:d reg alg}, which is not present in \eqref{eq:extra} or \eqref{eq:pdhg}, is that its $x_i$-updates in \eqref{eq:d reg alg} are performed sequentially within each iteration. Furthermore, the update \eqref{eq:d reg alg} requires no knowledge of past iterates.

\paragraph{Structure of this Work.} The remainder of the work is structured as follows. In Section~\ref{s:resolvent splitting}, we develop an abstract framework for convergence of frugal resolving splitting algorithms as fixed point iterations, which extend ideas from \cite{malitsky2021resolvent}, and analyse its convergence properties. More precisely, the main results in Section~\ref{s:resolvent splitting} establish averaged nonexpansivity of the algorithmic operator (Lemma~\ref{lem:nonexpansive}) and weak convergence of its iterates (Theorem~\ref{th:resolvent splitting}). Then, in Section~\ref{s:realisations}, we provide various concrete realisation of the abstract framework. In particular, this includes a previously unknown extension of Ryu's method~\cite{ryu2020uniqueness} to finitely many operators (Example~\ref{ex:ryu new}), the method proposed in \cite{malitsky2021resolvent} (Example~\ref{ex:minimal}), as well as the new decentralised scheme~\eqref{eq:d reg alg} when the communication graph $G$ is regular (Example~\ref{ex:regular}).
Numerical examples to demonstrated the proposed framework are reported in Section~\ref{s:experiments}.

\section{A Framework for Resolvent Splitting}\label{s:resolvent splitting}
In this section, we consider an operator $T\colon\Hilbert^m\to\Hilbert^m$ of the form
\begin{equation}\label{eq:T}
 T(\bz) := \bz + \gamma \bM\bx,\qquad \by=\bS\bz+\bN\bx,\qquad \bx=J_{\bF}(\by), 
\end{equation}
where $M\in\mathbb{R}^{m\times n}, S\in\mathbb{R}^{n\times m}$ and $N\in\mathbb{R}^{n\times n}$ are coefficient matrices. Moreover, we assume that the resolvents are evaluated in the order $J_{A_1},\dots,J_{A_n}$, which means that the matrix $N$ is necessarily lower triangular with zeros on the diagonal. Altogether, we require the following assumptions on the coefficient matrices.

\begin{assumption}\label{ass:resolvent}
The coefficient matrices $M\in\mathbb{R}^{m\times n}, S\in\mathbb{R}^{n\times m}$ and $N\in\mathbb{R}^{n\times n}$ satisfy the following properties.
\begin{enumerate}[(a)]
\item\label{ass:res_a} $\ker M\subseteq\mathbb{R}e$ where $e=(1,\dots,1)^T\in\mathbb{R}^n$.
\item\label{ass:res_b} $\sum_{i,j=1}^nN_{ij}=n$ and $N$ is lower triangular with zeros on the diagonal.
\item\label{ass:res_c} $S^*=-M$.
\item\label{ass:res_d} $M^*M+N+N^*-2\Id\preceq 0. $
\end{enumerate}
\end{assumption}

\begin{remark}
When Assumption~\ref{ass:resolvent} holds, properties of the Kronecker product imply that
\begin{equation}\label{eq:ker M}
 \ker\bM = \Delta := \{\bx=(x_1,\dots,x_n)\in\Hilbert^n:x_1=\dots=x_n\}, 
\end{equation}
$\bS^*=-\bM$ and $\bM^*\bM+\bN+\bN^*-2\Id\preceq 0$. To see \eqref{eq:ker M}, we note the following chain of implications
\begin{align*}
 (x_1,\dots,x_n)\in\ker\bM 
 &\implies (\langle x_1,v\rangle,\dots,\langle x_n,v\rangle)\in \ker M\quad \forall v\in\Hilbert\\
 &\implies \langle x_1,v\rangle=\dots=\langle x_n,v\rangle\quad \forall v\in\Hilbert \\
 &\implies x_1=\dots=x_n \\
 &\implies (x_1,\dots,x_n)\in\ker\bM.
\end{align*}
\end{remark}

\begin{lemma}[Fixed points and zeros]\label{lem:fixed points}
Suppose Assumptions~\ref{ass:resolvent}\eqref{ass:res_a}-\eqref{ass:res_c} holds, and denote
$$ \Omega := \{(\bz,x)\in\Hilbert^m\times\Hilbert\colon \bx=J_{\bF}(\bS\bz+\bN\bx)\text{~where~}\bx=(x,\dots,x)\in\Delta\}. $$
Then the following assertions hold.
\begin{enumerate}[(a)]
\item If $\bz\in\Fix T$, then there exists $x\in\Hilbert$ such that $(\bz,x)\in\Omega$.
\item If $x\in\zer\left(\sum_{i=1}^nF_i\right)$, then there exists $\bz\in\Hilbert^m$ such that $(\bz,x)\in\Omega$.
\item If $(\bz,x)\in\Omega$, then $\bz\in\Fix T$ and $x\in\zer(\sum_{i=1}^nF_i)$. 
\end{enumerate}
Consequently, 
 $$\Fix T\neq\emptyset\iff\Omega\neq\emptyset\iff \zer\left(\sum_{i=1}^nF_i\right)\neq\emptyset.$$
\end{lemma}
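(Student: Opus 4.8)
\textbf{The plan.} The three assertions are "unwindings" of the definitions of $\Fix T$, $\Omega$, and $\zer(\sum_i F_i)$, linked by the structural assumptions (a)--(c). The key algebraic fact I would isolate first is this: by Assumption~\ref{ass:resolvent}\eqref{ass:res_a}, $\ker\bM = \Delta$, so $\bM\bx = 0 \iff \bx\in\Delta \iff x_1=\dots=x_n$. A second fact I would record up front: for any $\bx=(x,\dots,x)\in\Delta$ and any $\by$, we have $\bx = J_{\bF}(\by) \iff \by-\bx\in\bF(\bx) \iff \by_i - x\in F_i(x)$ for all $i$; summing the $n$ inclusions gives $\sum_i(\by_i - x) = \sum_i \by_i - nx \in \sum_i F_i(x)$, and conversely a zero of $\sum_i F_i$ can be split as $0 = \sum_i y_i$ with $y_i\in F_i(x)$. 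These two observations, plus $\bS^* = -\bM$ from \eqref{ass:res_c}, are all the machinery needed.

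\textbf{Assertion (a).} Suppose $\bz\in\Fix T$. From \eqref{eq:T}, $T(\bz) = \bz + \gamma\bM\bx$ with $\bx = J_{\bF}(\bS\bz+\bN\bx)$, so $T(\bz)=\bz$ forces $\gamma\bM\bx = 0$, hence $\bM\bx = 0$, hence $\bx\in\Delta$ by \eqref{ass:res_a}; write $\bx=(x,\dots,x)$. Then $(\bz,x)$ satisfies exactly the defining condition of $\Omega$, so $(\bz,x)\in\Omega$.

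\textbf{Assertion (c).} Suppose $(\bz,x)\in\Omega$, so $\bx=(x,\dots,x)$ and $\bx = J_{\bF}(\bS\bz+\bN\bx)$. Since $\bx\in\Delta=\ker\bM$, we have $\bM\bx=0$, so $T(\bz) = \bz + \gamma\bM\bx = \bz$, i.e. $\bz\in\Fix T$. For the zero claim, set $\by = \bS\bz+\bN\bx$; the resolvent identity gives $\by - \bx\in\bF(\bx)$, i.e. $y_i - x\in F_i(x)$ for each $i$. Summing, $\sum_i y_i - nx\in\sum_i F_i(x)$. Now I would compute $\sum_i y_i = \langle e, \by\rangle = \langle e, \bS\bz\rangle + \langle e,\bN\bx\rangle$. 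For the second term, $\langle e,\bN\bx\rangle = \sum_{i,j}N_{ij}x = nx$ using \eqref{ass:res_b} (here is where $\sum_{ij}N_{ij}=n$ enters, cancelling the $-nx$). For the first term, $\langle e,\bS\bz\rangle = \langle \bS^*e,\bz\rangle = \langle -\bM e,\bz\rangle = -\langle \bM e,\bz\rangle$, and since $e\in\ker M$ (indeed $\mathbb{R}e\supseteq\ker M$, and $e$ itself: one checks $Me = 0$ because... ) — \emph{this is the point to be careful about}: I need $Me = 0$, equivalently $e\in\ker M$. Assumption \eqref{ass:res_a} says $\ker M\subseteq\mathbb{R}e$, which is the reverse inclusion. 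So I would instead argue that $\langle e, \bM\bx\rangle$ must be handled via the transpose landing on $\bz$ only after establishing $M^* e$'s role, or — cleaner — observe that consistency of the framework forces $Me=0$ and flag that this either follows from \eqref{ass:res_b}--\eqref{ass:res_d} or should be read as part of \eqref{ass:res_a}; in the intended reading $\ker M = \mathbb{R}e$, so $Me=0$. Granting $Me=0$, we get $\langle e,\bS\bz\rangle = 0$, hence $\sum_i y_i = nx$, hence $0\in\sum_i F_i(x)$, i.e. $x\in\zer(\sum_i F_i)$.

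\textbf{Assertion (b).} Suppose $x\in\zer(\sum_i F_i)$, so $0\in\sum_i F_i(x)$; pick $y_i\in F_i(x)$ with $\sum_i y_i = 0$. Set $\bx=(x,\dots,x)\in\Delta$ and $\by=(x+y_1,\dots,x+y_n)$, so $\bx = J_{\bF}(\by)$. I must now produce $\bz\in\Hilbert^m$ with $\bS\bz + \bN\bx = \by$, i.e. $\bS\bz = \by - \bN\bx$. This is solvable iff $\by-\bN\bx\in\range\bS = \range(\bM^*)$ (using \eqref{ass:res_c}), equivalently $\by-\bN\bx\perp\ker\bM = \Delta$, equivalently $\sum_i(y_i + x - (\bN\bx)_i) = 0$. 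Computing: $\sum_i y_i = 0$, $\sum_i x = nx$, and $\sum_i(\bN\bx)_i = \sum_{ij}N_{ij}x = nx$ by \eqref{ass:res_b}; these cancel, so $\by-\bN\bx\perp\Delta$, and a suitable $\bz$ exists. Then $(\bz,x)$ satisfies the defining condition of $\Omega$.

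\textbf{The final equivalence} follows by chaining: (a)+(c) give $\Fix T\neq\emptyset\iff\Omega\neq\emptyset$, and (b)+(c) give $\Omega\neq\emptyset\iff\zer(\sum_i F_i)\neq\emptyset$.

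\textbf{Main obstacle.} The only genuinely delicate point is the orthogonality/range computation in (b) and the cancellation of $nx$ terms in (c) — both hinge on reading $\ker M = \mathbb{R}e$ (so that $\Delta^\perp = \range\bM^*$ and $Me=0$) together with the normalisation $\sum_{ij}N_{ij}=n$. Everything else is routine unwinding of resolvent and fixed-point definitions.
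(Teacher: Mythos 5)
Your proof follows essentially the same route as the paper's: (a) and (c) unwind the fixed-point and resolvent definitions using $\ker\bM$ together with the normalisation $\sum_{i,j}N_{ij}=n$, and (b) reduces the existence of $\bz$ to the range condition $\by-\bN\bx\in\range\bS=(\ker\bM)^\perp\supseteq\Delta^\perp$, verified by exactly the same cancellation of the $nx$ terms. The one point you flag---that $Me=0$ does not literally follow from $\ker M\subseteq\mathbb{R}e$---is treated no more carefully in the paper, which simply asserts ``$Me=0$ by Assumption~\ref{ass:resolvent}\eqref{ass:res_a}''; your reading that the intended hypothesis is $\ker M=\mathbb{R}e$ is consistent with the paper's own Remark (which claims $\ker\bM=\Delta$) and with every example given.
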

\begin{proof}
(a):~Let $\bz\in\Fix T$. Set $\by:=\bS\bz+\bN\bx$ and $\bx:=J_{\bF}(\by)$. Since $\bz\in\Fix T$, we have $\bx\in\ker M$ and so Assumption~\ref{ass:resolvent}\eqref{ass:res_a} implies $\bx\in\Delta$. 

(b):~Let $x\in\zer\bigl(\sum_{i=1}^nF_i\bigr)$ and set $\bx:=(x,\dots,x)\in\Delta$. Then there exists $\bv=(v_1\dots,v_n)\in \bF(\bx)$ such that $\sum_{i=1}^nv_i=0$. Define $\by=(y_1,\dots,y_n)\in\Hilbert^n$ according to $\by:=\bv+\bx$, so that $\bx=J_{\bF}(\by)$. To complete the proof, we must show there exists $\bz\in\Hilbert^m$ such that $\by=\bS\bz+\bN\bx$, which is equivalent to the $\by-\bN\bx\in\range\bS$. 
To this end, first note that Assumption~\ref{ass:resolvent}\eqref{ass:res_a} \& \eqref{ass:res_c} imply
 $$ \range \bS = (\ker \bS^*)^\perp = (\ker \bM)^\perp \supseteq \Delta^\perp = \{(x_1,\dots,x_n)\in\Hilbert^n:\sum_{i=1}^nx_i=0\}. $$
 With the help of Assumption~\ref{ass:resolvent}\eqref{ass:res_b}, we see that
$$ \sum_{i=1}^n(\by-\bN\bx)_i = \sum_{i=1}^ny_i - \sum_{i,j=1}^nN_{ij}x =\left(\sum_{i=1}^nv_i+nx\right) - nx =0, $$
Hence $\by-\bN\bx\in\range \bS$, as required.

(c):~Let $(\bz,x)\in\Omega$ and set $\by:=\bS\bz+\bN\bx$ where $\bx=(x,\dots,x)\in\Delta$. Since $\bx=J_{\bF}(\by)\in\ker\bM$, we have $\bz\in\Fix T$ by Assumption~\ref{ass:resolvent}\eqref{ass:res_a}. Denote $e=(1,\dots,1)^\top\in\mathbb{R}^n$. Then $Me=0$ by Assumption~\ref{ass:resolvent}\eqref{ass:res_a}. Since $\bF(\bx)\ni \by-\bx=-\bM^*\bz+\bN\bx-\bx$, Assumption~\ref{ass:resolvent}\eqref{ass:res_b} gives
$$ \sum_{i=1}^nF_i(x) \ni -\bigl((e^\top M^*)\otimes \Id\bigr) \bz + \sum_{i,j=1}^nN_{ij}x - nx = - \bigl((Me)^\top\otimes \Id\bigr)\bz+nx-nx = 0, $$
This shows that $x\in\zer\bigl(\sum_{i=1}^nF_i\bigr)$ and so completes the proof.
\end{proof}

Recall that an operator $S$ is \emph{$\alpha$-averaged nonexpansive} if $\alpha\in(0,1)$ and, for all $x,y$, we have
 $$ \|S(x)-T(y)\|^2 + \frac{1-\alpha}{\alpha}\|(\Id-S)(x)-(\Id-S)(y)\|^2 \leq \|x-y\|^2. $$
 
\begin{lemma}[Nonexpansivity]\label{lem:nonexpansive}
Let $F_1,\dots,F_n\colon\Hilbert\setto\Hilbert$ be maximally monotone.
Suppose Assumptions~\ref{ass:resolvent}\eqref{ass:res_c}~\&~\eqref{ass:res_d} hold, and let $\gamma>0$. Then, for all $\bz,\bar{\bz}\in\Hilbert^m$, we have
\begin{multline}\label{eq:averaged}
 \|T(\bz)-T(\bar{\bz})\|^2 + \frac{1-\gamma}{\gamma}\|(\Id-T)(\bz)-(\Id-T)(\bar{\bz})\|^2
 \\
  + \gamma\langle \bx-\bar{\bx},[2\Id-\bM^*\bM-\bN-\bN^*](\bx-\bar{\bx})\rangle \leq \|\bz-\bar{\bz}\|^2.
\end{multline}
where $\bx=J_{\bF}(\bS\bz+\bN\bx)$ and $\bar{\bx}=J_{\bF}(\bS\bar{\bz}+\bN\bar{\bx})$.
In particular, $T$ is $\gamma$-averaged nonexpansive whenever $\gamma\in(0,1)$.
\end{lemma}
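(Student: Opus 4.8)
The plan is to expand the left-hand side of \eqref{eq:averaged} by substituting the definition \eqref{eq:T} of $T$ and then to exploit monotonicity of $\bF$ through its resolvent. First I would write $T(\bz)-T(\bar\bz) = (\bz-\bar\bz) + \gamma\bM(\bx-\bar\bx)$ and $(\Id-T)(\bz)-(\Id-T)(\bar\bz) = -\gamma\bM(\bx-\bar\bx)$, so that the first two terms of the left-hand side become
\begin{align*}
\|(\bz-\bar\bz)+\gamma\bM(\bx-\bar\bx)\|^2 + \gamma(1-\gamma)\|\bM(\bx-\bar\bx)\|^2.
\end{align*}
Expanding the first square gives $\|\bz-\bar\bz\|^2 + 2\gamma\langle\bz-\bar\bz,\bM(\bx-\bar\bx)\rangle + \gamma^2\|\bM(\bx-\bar\bx)\|^2$, and adding the second term collects the $\|\bM(\bx-\bar\bx)\|^2$ contributions into $\gamma\|\bM(\bx-\bar\bx)\|^2 = \gamma\langle\bx-\bar\bx,\bM^*\bM(\bx-\bar\bx)\rangle$. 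Hence, after subtracting $\|\bz-\bar\bz\|^2$ from both sides, the claimed inequality \eqref{eq:averaged} reduces to
\begin{align*}
2\gamma\langle\bz-\bar\bz,\bM(\bx-\bar\bx)\rangle + \gamma\langle\bx-\bar\bx,\bM^*\bM(\bx-\bar\bx)\rangle + \gamma\langle\bx-\bar\bx,(2\Id-\bM^*\bM-\bN-\bN^*)(\bx-\bar\bx)\rangle \leq 0,
\end{align*}
i.e., dividing by $\gamma>0$ and cancelling the $\bM^*\bM$ terms,
\begin{align*}
\langle\bz-\bar\bz,\bM(\bx-\bar\bx)\rangle + \langle\bx-\bar\bx,(\Id-\tfrac{1}{2}\bN-\tfrac{1}{2}\bN^*)(\bx-\bar\bx)\rangle \leq 0.
\end{align*}

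The second step is to produce the monotonicity inequality. Since $\bx=J_{\bF}(\by)$ with $\by=\bS\bz+\bN\bx$, we have $\by-\bx\in\bF(\bx)$, and similarly $\bar\by-\bar\bx\in\bF(\bar\bx)$; monotonicity of $\bF$ then gives $\langle\bx-\bar\bx,(\by-\bx)-(\bar\by-\bar\bx)\rangle\geq 0$. Substituting $\by-\bar\by = \bS(\bz-\bar\bz)+\bN(\bx-\bar\bx)$ and using $\bS^*=-\bM$ (Assumption~\ref{ass:resolvent}\eqref{ass:res_c}, lifted via the Kronecker product as in the Remark), this becomes
\begin{align*}
\langle\bx-\bar\bx,\bS(\bz-\bar\bz)\rangle + \langle\bx-\bar\bx,(\bN-\Id)(\bx-\bar\bx)\rangle \geq 0,
\end{align*}
and $\langle\bx-\bar\bx,\bS(\bz-\bar\bz)\rangle = \langle\bS^*(\bx-\bar\bx),\bz-\bar\bz\rangle = -\langle\bM(\bx-\bar\bx),\bz-\bar\bz\rangle$. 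Thus $\langle\bz-\bar\bz,\bM(\bx-\bar\bx)\rangle \leq \langle\bx-\bar\bx,(\bN-\Id)(\bx-\bar\bx)\rangle = \langle\bx-\bar\bx,(\tfrac12\bN+\tfrac12\bN^*-\Id)(\bx-\bar\bx)\rangle$, where the last equality uses that a quadratic form only sees the symmetric part. This is exactly the displayed reduced inequality, so \eqref{eq:averaged} follows. Finally, when $\gamma\in(0,1)$ Assumption~\ref{ass:resolvent}\eqref{ass:res_d} (lifted) makes the term $\gamma\langle\bx-\bar\bx,(2\Id-\bM^*\bM-\bN-\bN^*)(\bx-\bar\bx)\rangle$ nonnegative and the coefficient $\tfrac{1-\gamma}{\gamma}$ positive, so \eqref{eq:averaged} yields $\|T(\bz)-T(\bar\bz)\|^2 + \tfrac{1-\gamma}{\gamma}\|(\Id-T)(\bz)-(\Id-T)(\bar\bz)\|^2 \leq \|\bz-\bar\bz\|^2$, which is precisely the definition of $\gamma$-averaged nonexpansiveness.

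The computation is essentially routine; the only point requiring care is the well-definedness of $\bx$ and $\bar\bx$ as implicitly defined by $\bx=J_{\bF}(\bS\bz+\bN\bx)$. Because $\bN$ is strictly lower triangular (Assumption~\ref{ass:resolvent}\eqref{ass:res_b}), this is a triangular system that can be solved coordinate by coordinate, each step being a single resolvent evaluation, so $\bx$ exists and is unique; this is the same observation already used implicitly in \eqref{eq:T} and in Lemma~\ref{lem:fixed points}. The other mild subtlety is bookkeeping the symmetrisation $\langle u,\bN u\rangle = \tfrac12\langle u,(\bN+\bN^*)u\rangle$ and making sure all Kronecker-lifted identities ($\bS^*=-\bM$, the spectral inequality) are applied on $\Hilbert^n$ rather than on $\mathbb{R}^n$, which is precisely the content of the Remark preceding Lemma~\ref{lem:fixed points}. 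I do not anticipate a genuine obstacle beyond this.
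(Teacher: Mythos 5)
Your proposal is correct and follows essentially the same route as the paper: both arguments rest on the monotonicity inequality $\langle \bx-\bar{\bx},(\by-\bx)-(\bar{\by}-\bar{\bx})\rangle\geq 0$ obtained from $\by-\bx\in\bF(\bx)$, the adjoint identity $\bS^*=-\bM$, and the algebraic expansion relating $\|T(\bz)-T(\bar{\bz})\|^2$ to $\|\bz-\bar{\bz}\|^2$ and $\|\bM(\bx-\bar{\bx})\|^2$. The only difference is cosmetic — you expand the target inequality and reduce it to the monotonicity inequality, whereas the paper starts from monotonicity and rewrites its two terms via polarisation identities — and your remark on the well-definedness of $\bx$ via the strict lower triangularity of $N$ is a point the paper leaves implicit.
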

\begin{proof}
Let $\bz,\bar{\bz}\in\Hilbert^m$, Set $\bx:=J_{\bF}(\by)$ where $\by:=\bS\bz+\bN\bx$ and $\bar{\bx}:=J_{\bF}(\bar{\by})$ where $\bar{\by}:=\bS\bar{\bz}+\bN\bar{\bx}$. Since $\by-\bx\in \bF(\bx)$ and $\bar{\by}-\bar{\bx}\in\bF(\bar{\bx})$, monotonicity of $\bF=(F_1,\dots,F_n)$ gives
\begin{equation}\label{eq:mono}
\begin{aligned}
0 &\leq \langle \bx-\bar{\bx},(\by-\bx)-(\bar{\by}-\bar{\bx})\rangle \\
  &= \langle \bx-\bar{\bx},(\bS\bz+\bN\bx-\bx)-(\bS\bar{\bz}+\bN\bar{\bx}-\bar{\bx})\rangle \\
  &= \langle \bS^*\bx-\bS^*\bar{\bx},\bz-\bar{\bz}\rangle + \langle \bx-\bar{\bx},(\bN-\Id)\bx-(\bN-\Id)\bar{\bx}\rangle.
\end{aligned}
\end{equation}
By Assumption~\ref{ass:resolvent}\eqref{ass:res_c}, $\bS^*=-\bM$ and so the first term in \eqref{eq:mono} can be expressed as
\begin{equation}\label{eq:adj}
\begin{aligned}
&\langle \bS^*\bx-\bS^*\bar{\bx},\bz-\bar{\bz}\rangle \\
  &=\langle (-\bM\bx)-(-\bM\bar{\bx}),\bz-\bar{\bz}\rangle \\
  &= \frac{1}{\gamma}\langle (\Id-T)(\bz)-(\Id-T)(\bar{\bz}),\bz-\bar{\bz}\rangle  \\    
  &= \frac{1}{2\gamma}\left(\|\bz-\bar{\bz}\|^2+\|(\Id-T)(\bz)-(\Id-T)(\bar{\bz})\|^2-\|T(\bz)-T(\bar{\bz})\|^2\right).
\end{aligned}
\end{equation}
The second term in \eqref{eq:mono} can be expressed as
\begin{equation}\label{eq:nsd}
\begin{aligned}
 &\langle \bx-\bar{\bx},(\bN-\Id)\bx-(\bN-\Id)\bar{\bx}\rangle \\
 &= \frac{1}{2}\langle \bx-\bar{\bx},[\bM^*\bM+2\bN-2\Id](\bx-\bar{\bx})\rangle - \frac{1}{2}\|\bM\bx-\bM\bar{\bx}\|^2 \\
  &= \frac{1}{2}\langle \bx-\bar{\bx},[\bM^*\bM+\bN+\bN^*-2\Id](\bx-\bar{\bx})\rangle -\frac{1}{2\gamma^2}\|(\Id-T)(\bz)-(\Id-T)(\bar{\bz})\|^2.
\end{aligned}
\end{equation}
Substituting \eqref{eq:adj} and \eqref{eq:nsd} into \eqref{eq:mono}, followed by multiplying by $2\gamma$, gives \eqref{eq:averaged}. In particular, if Assumption~\ref{ass:resolvent}\eqref{ass:res_d} holds, then the inner-product on the LHS of \eqref{eq:averaged} is non-negative and hence $T$ is $\gamma$-averaged nonexpansive whenever $\gamma\in(0,1)$.
\end{proof}

\begin{remark}
As we will explain in Section~\ref{s:realisations}, the setting considered here includes Ryu's method~\cite{ryu2020uniqueness} and the resolvent splitting with minimal lifting from \cite{malitsky2021resolvent} as special cases. Consequently,  Lemma~\ref{lem:nonexpansive} provides a very concise proof covering \cite[Section~4.2]{ryu2020uniqueness} and \cite[Lemma~4.3]{malitsky2021resolvent} (each of which took several pages of algebraic manipulation).
\end{remark}

The following theorem is our main result regarding convergence of the fixed point iteration corresponding to $T$ in \eqref{eq:T}.
\begin{theorem}\label{th:resolvent splitting}
Let $F_1,\dots,F_n\colon\Hilbert\setto\Hilbert$ be maximally monotone operators with $\zer(\sum_{i=1}^nF_i)\neq\emptyset$. Suppose Assumption~\ref{ass:resolvent} holds and let $\gamma\in(0,1)$. Let $\bz^0\in\Hilbert$ and define the sequences $(\bz^k)$ and $(\bx^k)$ according to
\begin{equation}\label{eq:th iter}
 \bz^{k+1} = T(\bz^k) = \bz^k+\gamma\bM\bx^k,\qquad \bx^k=J_{\bF}(\bS\bz^k+\bN\bx^k). 
\end{equation}
Then the following assertions hold.
\begin{enumerate}[(a)]
\item As $k\to\infty$, we have $\bz^k-\bz^{k+1}\to 0$ and $\sum_{i=1}^ns_ix^k_i\to 0$ for all $s\in\mathbb{R}^n$ with $\sum_{i=1}^ns_i=0$.
\item The sequence $(\bz^k)$ converges weakly to a point $\bar{\bz}\in\Fix T$.
\item The sequence $(\bx^k)$ converges weakly to a point $(\bar{x},\dots,\bar{x})\in\Hilbert^n$ such that $\bar{x}\in\zer(\sum_{i=1}^nF_i)$.
\end{enumerate}
\end{theorem}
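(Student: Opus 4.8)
## Proof Proposal

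The plan is to combine the averaged nonexpansivity from Lemma~\ref{lem:nonexpansive} with the fixed-point characterisation from Lemma~\ref{lem:fixed points} via a standard Fej\'er-monotonicity argument (in the spirit of the Krasnosel'ski\u{i}--Mann / Opial framework). First, since $\zer(\sum_{i=1}^n F_i)\neq\emptyset$, Lemma~\ref{lem:fixed points}(b)--(c) guarantees $\Fix T\neq\emptyset$; fix any $\bar{\bz}\in\Fix T$ with associated $(\bar{\bz},\bar x)\in\Omega$, so that $\bar{\bx}=(\bar x,\dots,\bar x)\in\Delta$ and $\bar{\bx}=J_{\bF}(\bS\bar{\bz}+\bN\bar{\bx})$. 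Applying the inequality \eqref{eq:averaged} with $\bz=\bz^k$ and noting $(\Id-T)(\bar{\bz})=0$ gives
\begin{equation*}
\|\bz^{k+1}-\bar{\bz}\|^2 + \frac{1-\gamma}{\gamma}\|\bz^k-\bz^{k+1}\|^2 + \gamma\langle\bx^k-\bar{\bx},[2\Id-\bM^*\bM-\bN-\bN^*](\bx^k-\bar{\bx})\rangle \leq \|\bz^k-\bar{\bz}\|^2.
\end{equation*}
By Assumption~\ref{ass:resolvent}\eqref{ass:res_d} the inner-product term is nonnegative, so $(\|\bz^k-\bar{\bz}\|)$ is nonincreasing: the sequence $(\bz^k)$ is Fej\'er monotone with respect to $\Fix T$. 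In particular it is bounded, and telescoping yields $\sum_k \|\bz^k-\bz^{k+1}\|^2<\infty$ as well as $\sum_k \langle\bx^k-\bar{\bx},[2\Id-\bM^*\bM-\bN-\bN^*](\bx^k-\bar{\bx})\rangle<\infty$. This proves $\bz^k-\bz^{k+1}\to 0$.

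For the remainder of part (a), I would argue that $2\Id-\bM^*\bM-\bN-\bN^*\succeq 0$ and its (real, symmetric) kernel is exactly the span of $\Delta$-type directions orthogonal to which lives any $s$ with $\sum_i s_i=0$; more carefully, the summable-term estimate forces $[2\Id-\bM^*\bM-\bN-\bN^*]^{1/2}(\bx^k-\bar{\bx})\to 0$. Combined with $\bz^k-\bz^{k+1}=-\gamma\bM\bx^k\to 0$ (hence $\bM\bx^k\to 0$, i.e.\ $\bM(\bx^k-\bar{\bx})\to 0$ since $\bM\bar{\bx}=0$), and using $\bM^*\bM + (\bN+\bN^*-2\Id) \preceq 0$ together with $\bM^*\bM\succeq 0$, one deduces that for any $s\in\mathbb{R}^n$ with $\sum_i s_i=0$ the quantity $\sum_i s_i x^k_i = \langle \mathbf{s}, \bx^k-\bar{\bx}\rangle$ (where $\mathbf{s}=(s_1,\dots,s_n)$, noting $\langle\mathbf{s},\bar{\bx}\rangle = \bar x\sum_i s_i = 0$) tends to $0$; the key point is that $\mathbf{s}\in\Delta^\perp$ lies in the range of the positive semidefinite matrix $2\Id-\bM^*\bM-\bN-\bN^*$ acting on $\Hilbert^n$, which I expect follows from Assumption~\ref{ass:resolvent}\eqref{ass:res_a}--\eqref{ass:res_b} (the linear-algebraic claim that $\Delta^\perp\cap\ker(2\Id - M^*M - N - N^*)=\{0\}$). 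This linear-algebra lemma is the technical heart of part (a).

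For parts (b) and (c), I would invoke Opial's lemma. By Fej\'er monotonicity, every weak cluster point of $(\bz^k)$ needs to be shown to lie in $\Fix T$; take a weakly convergent subsequence $\bz^{k_\ell}\wto\tilde{\bz}$. Boundedness of $(\bz^k)$ and the structure of $T$ give boundedness of $(\bx^k)$ (via nonexpansiveness of $J_{\bF}$ and the lower-triangular solvability of $\by=\bS\bz+\bN\bx$); pass to a further subsequence so that $\bx^{k_\ell}\wto\tilde{\bx}$. From part (a), $\tilde{\bx}\in\Delta$, say $\tilde{\bx}=(\tilde x,\dots,\tilde x)$. Using the demiclosedness of the graph of the maximally monotone operator $\bF$ (equivalently, weak-strong closedness properties of $J_{\bF}$) together with $\bz^k-\bz^{k+1}\to 0$, one shows that $(\tilde{\bz},\tilde x)$ satisfies $\tilde{\bx}=J_{\bF}(\bS\tilde{\bz}+\bN\tilde{\bx})$, i.e.\ $(\tilde{\bz},\tilde x)\in\Omega$, so $\tilde{\bz}\in\Fix T$ by Lemma~\ref{lem:fixed points}(c). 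Opial's lemma then yields that the whole sequence $(\bz^k)$ converges weakly to a single $\bar{\bz}\in\Fix T$, giving (b). For (c), the same demiclosedness argument applied along the full sequence (any weak cluster point $\tilde{\bx}$ of $\bx^k$ is in $\Delta$ and solves the inclusion, hence equals $J_{\bF}(\bS\bar{\bz}+\bN\bar{\bx})$ which is uniquely determined once $\bar{\bz}$ is fixed by the lower-triangular recursion), shows $\bx^k\wto(\bar x,\dots,\bar x)$ with $\bar x\in\zer(\sum_i F_i)$ by Lemma~\ref{lem:fixed points}(c). The main obstacle I anticipate is the careful passage to the limit in the resolvent identity to conclude $(\tilde{\bz},\tilde x)\in\Omega$ — establishing that $\bS\bz^{k_\ell}+\bN\bx^{k_\ell} - \bx^{k_\ell} \wto \bS\tilde{\bz}+\bN\tilde{\bx}-\tilde{\bx}$ and that this weak limit of graph elements, paired with $\tilde{\bx}$, remains in $\gra\bF$, which requires combining weak convergence with the strong convergence information $\bz^k-\bz^{k+1}\to 0$ to get the matching of inner products needed for monotonicity.
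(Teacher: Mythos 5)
Your overall strategy (Fej\'er monotonicity of $(\bz^k)$ from Lemma~\ref{lem:nonexpansive}, then Opial/demiclosedness) is the right one and matches the paper's for parts (a)--(b), but two steps as proposed do not go through. First, the lemma you single out as ``the technical heart of part (a)'' --- that every $s\in\Delta^\perp$ lies in $\range(2\Id-M^*M-N-N^*)$, equivalently that $\ker(2\Id-M^*M-N-N^*)\subseteq\R e$ --- is \emph{false} under Assumption~\ref{ass:resolvent}: condition \eqref{ass:res_d} only asserts positive semidefiniteness and says nothing about the kernel. In the $d$-regular realisation (Example~\ref{ex:regular}) one has $M^*M+N+N^*-2\Id=0$, so the matrix whose range you invoke is identically zero while $\Delta^\perp\neq\{0\}$; the quadratic-form term in \eqref{eq:averaged} can therefore carry no information at all. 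The correct argument is simpler and uses only the ingredient you already have, namely $\bM\bx^k=\gamma^{-1}(\bz^{k+1}-\bz^k)\to 0$: by Assumption~\ref{ass:resolvent}\eqref{ass:res_a}, $\Delta^\perp=(\R e)^\perp\subseteq(\ker M)^\perp=\range M^*=\range S$, so $s=Sv=-M^*v$ for some $v\in\R^m$, whence $\sum_{i=1}^ns_ix_i^k=-(v^\top\otimes\Id)\bM\bx^k\to 0$.

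Second, in part (c) you correctly name the obstacle --- both $\bx^{k_\ell}$ and $\by^{k_\ell}-\bx^{k_\ell}$ converge only weakly, while $\gra\bF$ is sequentially closed only in the weak--strong topology --- but you do not resolve it, and ``demiclosedness of the graph of $\bF$'' as stated simply does not apply to a weak--weak limit. The paper's device is to repackage the graph elements $(\bx^k,\by^k-\bx^k)\in\gra\bF$ as elements of the graph of the auxiliary maximally monotone operator $\mathcal{S}$ built from $F_1^{-1},\dots,F_{n-1}^{-1},F_n$ plus a skew bounded linear operator, arranged so that the \emph{second} coordinate becomes $\bigl(x_1^k-x_n^k,\dots,x_{n-1}^k-x_n^k,\sum_{i=1}^n(\sum_{j=1}^nN_{ij}-1)x_i^k\bigr)$, which converges strongly to zero by part (a) and Assumption~\ref{ass:resolvent}\eqref{ass:res_b}; weak--strong closedness of $\gra\mathcal{S}$ then delivers the limiting inclusion and identifies the unique weak cluster point as $\bar{x}=J_{F_1}(S_1\bar{\bz})$. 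Without this (or a limsup-of-inner-products argument, which you would still have to supply), your proof of (c) --- and your route to (b) via $(\tilde{\bz},\tilde{x})\in\Omega$ --- is incomplete. Note that (b) needs none of this: it follows from averagedness, $\Fix T\neq\emptyset$ and the standard Krasnosel'skii--Mann theorem, whose demiclosedness step concerns the nonexpansive map $T$ itself, not $\bF$. Finally, your boundedness claim for $(\bx^k)$ via ``nonexpansiveness of $J_{\bF}$'' should be made precise as in the paper: the first row of $N$ vanishes, so $x_1^k=J_{F_1}(\bS_1\bz^k)$ is bounded, and part (a) bounds the remaining components through the differences $x_i^k-x_1^k$.
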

\begin{proof}
(a)~\&~(b):~Since $\zer(\sum_{i=1}^nF_i)\neq\emptyset$, Lemma~\ref{lem:fixed points} implies $\Fix T\neq\emptyset$. By Lemma~\ref{lem:nonexpansive}, the operator $T$ is averaged nonexpansive and so it follows from \cite[Proposition~5.16]{bauschke2017convex} that $\bz^k-\bz^{k+1}\to 0$ and $(\bz^k)$ converges weakly to a point $\bar{\bz}\in\Fix T$.
Next, let $s\in\{s\in\mathbb{R}^n:\sum_{i=1}^ns_i=0\}=\Delta^\perp \subseteq (\ker M)^\perp = \range M^* = \range S$. Then, there exists $v\in\mathbb{R}^m$ such that $s=Sv=-M^*v$ and hence
$$ - \sum_{i=1}^ns_ix_i^k = -(s^\top\otimes \Id)\bx^k = \bigl((v^\top M)\otimes \Id\bigr)\bx^k = (v^\top\otimes \Id)\bM\bx^k = \frac{1}{\gamma} (v^\top\otimes \Id)(\bz^{k+1}-\bz^k)\to 0.$$
(c):~Denote $\by^k=\bS\bz^k+\bN\bz^k$. We claim that the sequence $(\bx^k)$ is bounded. To see this, let $S_1\in\mathbb{R}^{1\times n}$ denote the first row of the matrix $S$ and let $N_1=(0,\dots,0)\in\mathbb{R}^{1\times n}$ denote the first row of the matrix $N$ (which is zero by Assumption~\ref{ass:resolvent}\eqref{ass:res_b}). Thus, by definition, we have
 $$ x^k_1 = J_{F_1}(y_1^k) = J_{F_1}(\bS_1\bz^k+\bN_1\bx^k) = J_{F_1}(\bS_1\bz^k). $$
By Lemma~\ref{lem:fixed points}, $(\bar{\bz},\bar{x})\in\Omega$ where $\bar{x}=J_{F_1}(\bS_1\bar{\bz})$. Thus, by nonexpansivity of resolvents \cite[Proposition~23.8]{bauschke2017convex} and boundedness of $(\bz^k)$, it then follows that $(x^k_1)$ is also bounded. By (a), it follows that $(\bx^k)$ is bounded, as claimed.

Next, using the identity $\bx^k=J_{\bF}(\by^k)$, followed by the identity $\by^k=\bS\bz^k+\bN\bx^k$ and the fact that $\range\bS \subseteq \Delta^\perp$, we deduce that
\begin{equation}\label{eq:demiclosed}
\mathcal{S}\left(\begin{pmatrix}
y_1^k-x_1^k \\ \vdots \\ y_{n-1}^k-x_{n-1}^k \\ x_n^k \\
\end{pmatrix}\right) \ni \begin{pmatrix}
x_1^k-x_n^k\\ \vdots \\ x_{n-1}^k-x_n^k \\ \sum_{i=1}^{n}y_i^k-\sum_{i=1}^nx_i^k \\
\end{pmatrix}
=\begin{pmatrix}
x_1^k-x_n^k\\ \vdots \\ x_{n-1}^k - x_n^k \\ \sum_{i=1}^{n}\bigl(\sum_{j=1}^nN_{ij}-1\bigr)x_i^k \\
\end{pmatrix}
\end{equation}
where $\mathcal{S}\colon\Hilbert^n\setto\Hilbert^n$ denotes the operator given by 
$$ \mathcal{S} := \begin{pmatrix}
(F_1)^{-1} \\ \vdots \\ (F_{n-1})^{-1} \\ F_n \\
\end{pmatrix}+\begin{bmatrix}
0 & \dots & 0 & -\Id \\ 
\vdots & \ddots & \vdots & \vdots \\
0 & \dots & 0 & -\Id \\
\Id & \dots & \Id & 0 \\
\end{bmatrix}. $$
Note that $\mathcal{S}$ is maximally monotone operator as the sum of the two maximally monotone operators, the latter having full domain \cite[Corollary~24.4(i)]{bauschke2017convex}. Consequently, its graph is sequentially closed in the weak-strong topology \cite[Proposition~20.32]{bauschke2017convex}. Note also that the RHS of \eqref{eq:demiclosed} converges strongly to zero as a consequence of (a) and Assumption~\ref{ass:resolvent}\eqref{ass:res_b}.

Let $\bx\in\Hilbert^n$ be an arbitrary weak cluster point of the sequence $(\bx^k)$. As a consequence of (a), it follows that $\bx=(x,\dots,x)$ for some $x\in\Hilbert$. Denote $\by=\bS\bar{\bz}+\bN\bx$. Taking the limit in \eqref{eq:demiclosed} along a subsequence of $(\bx^k)$ which converges weakly to $\bx$ gives
$$ \mathcal{S}\left(\begin{pmatrix}
y_1-x \\ \vdots \\ y_{n-1}-x \\ x \\
\end{pmatrix}\right) \ni \begin{pmatrix}
0\\ \vdots \\ 0 \\ 0 \\
\end{pmatrix} \quad\implies\quad \left\{\begin{aligned}
  F_i(x) &\ni y_i-x \quad \forall i\in\{1,\dots,n-1\}\\
  F_n(x) &\ni (n-1)x - \sum_{i=1}^{n-1}y_i. \\
\end{aligned}\right. $$
Altogether, this shows that $x=J_{F_1}(y_1)=J_{F_1}(S_1\bar{\bz})=\bar{x}$. In other words, $\bx=(\bar{x},\dots,\bar{x})$ is the unique weak sequential cluster point of the bounded sequence $(\bx^k)$. We therefore deduce that $(\bx^k)$ converges weakly to $\bx$, which completes the proof. 
\end{proof}

\begin{remark}\label{r:implementation}
A closer look at the iteration \eqref{eq:th iter} shows that, although convergence is analysed with respect to the sequence $(\bz^k)$, this sequence does not need to be directly computed. Indeed, using the fact that $\bS=-\bM^*$ and making the change of variables $\bv^k=\bS\bz^k$, we may write \eqref{eq:th iter} as 
 $$ \bv^{k+1} = \bv^k-\gamma \bM^*\bM\bx^k,\qquad \bx^{k} = J_{\bF}\bigl(\bv^k+\bN\bx^k\bigr). $$ 
In the case where $m>n$, this transformation also has the advantage of replacing a variable $\bz^k\in\Hilbert^m$ in the larger space with a variable $\bv^k\in\Hilbert^n$ in the smaller space.
\end{remark}

\section{Realisations of the Framework}\label{s:realisations}
In what follows, we provide several realisations of Theorem~\ref{th:resolvent splitting} for solving the monotone inclusion~\eqref{eq:zero}. Examples~\ref{ex:dr}, \ref{ex:ryu3} and \ref{ex:minimal} are already known in the literature. However, Examples~\ref{ex:ryu new} and \ref{ex:regular} are new.\footnote{An extension of Ryu splitting equivalent to \eqref{eq:ryu extension} in Example~\ref{ex:ryu new} was independently discovered using a different approach in \cite[p.~11]{bredies2022graph}. The original versions of the two preprints appeared on arXiv on the same day \cite{bredies2022graph,tam2022framework}.}

\begin{example}[Douglas--Rachford splitting]\label{ex:dr}
Douglas--Rachford splitting \cite{eckstein1992douglas,svaiter2011weak,bauschke2017douglas,bui2020douglas} for \eqref{eq:zero} with $n=2$ is defined by the operator $T\colon\Hilbert\to\Hilbert$ given by
 $$ T(z) = z + \gamma(x_2-x_1)\text{~~where~~}\begin{cases}
                                      x_1 = J_{F_1}(z) \\
                                      x_2 = J_{F_2}(2x_1-z)\\
                                    \end{cases}. $$
Thus, the corresponding coefficient matrices $M\in\mathbb{R}^{1\times 2},S\in\mathbb{R}^{2\times 1}$ and $N\in\mathbb{R}^{2\times 2}$ are given by
 $$ M = -S^* = \begin{bmatrix}
         -1 & 1 \\
        \end{bmatrix},\qquad
    N = \begin{bmatrix}
         0 & 0 \\
         2 & 0 \\
        \end{bmatrix}. $$            
Here, we have
 $$ M^*M + N + N^* - 2\Id = \begin{bmatrix}
 -1 & 1 \\
  1 & -1 \\
 \end{bmatrix}\preceq 0. $$        
\end{example}

\begin{example}[Ryu splitting for $n=3$]\label{ex:ryu3}
Ryu's splitting algorithm~\cite{ryu2020uniqueness,aragon2021strengthened} for \eqref{eq:zero} with $n=3$ is defined by the operator $T\colon\Hilbert^2\to\Hilbert^2$ given by
\begin{equation*}
 T(\bz) = \bz + \gamma\begin{pmatrix}
                     x_3-x_1 \\
                     x_3-x_2 \\
                   \end{pmatrix}\text{~~\color{black}where~~}\left\{\begin{aligned}
                                                 x_1 &= J_{F_1}(z_1) \\
                                                 x_2 &= J_{F_2}(z_2+x_1) \\
                                                 x_3 &= J_{F_3}(x_1-z_1+x_2-z_2). \\
                                                \end{aligned}\right.
\end{equation*}
Thus, the corresponding coefficient matrices $M\in\mathbb{R}^{2\times 3},S\in\mathbb{R}^{3\times 2}$ and $N\in\mathbb{R}^{3\times 3}$ are given by
\begin{equation}\label{eq:ryu3}
M = -S^* = \begin{bmatrix}
         -1 & 0 & 1 \\ 0 & -1 & 1 \\   
        \end{bmatrix},     
        \qquad
    N = \begin{bmatrix}
         0 & 0 & 0 \\ 1 & 0 & 0 \\ 1 & 1 & 0 \\   
        \end{bmatrix}.
\end{equation}        
It is straightforward to verify that Assumption~\ref{ass:resolvent}\eqref{ass:res_a}-\eqref{ass:res_c} holds. To verify Assumption~\ref{ass:resolvent}\eqref{ass:res_d}, observe that
$$ M^*M+N+N^*-2\Id = \begin{bmatrix}
         -1 & 1 & 0 \\ 1 & -1 & 0 \\  0 & 0 & 0 \\  
        \end{bmatrix}\preceq 0. $$       
\end{example}

\begin{example}[Resolvent splitting with minimal lifting]\label{ex:minimal}
The resolvent splitting algorithm considered in~\cite{malitsky2021resolvent,aragon2022distributed} for \eqref{eq:zero} with $n\geq 2$ is defined by the operator $T\colon\Hilbert^{n-1}\to\Hilbert^{n-1}$ given by
\begin{equation*}\label{eq:T minimal}
T(\bz)
=  \bz +
\gamma\begin{pmatrix}
x_2-x_1 \\
x_3-x_2 \\
\vdots \\
x_{n}-x_{n-1} \\
\end{pmatrix}
\text{~where~}
 \begin{cases}
    x_1=J_{F_1}(z_1),  \\
    x_i=J_{F_i}(z_i+x_{i-1}-z_{i-1}) \qquad \forall i\in\{2,\dots,n-1\} \\
    x_n=J_{F_n}(x_1+x_{n-1}-z_{n-1}). 
   \end{cases}
\end{equation*}
Thus, the coefficient matrices $M\in\mathbb{R}^{(n-1)\times n},S\in\mathbb{R}^{n\times(n-1)}$ and $N\in\mathbb{R}^{n\times n}$ are given by
$$ M = -S^* = \begin{bmatrix}
-1 &  1 & 0 & \dots & 0 & 0 \\
 0 & -1 & 1 & \dots & 0 & 0 \\
 0 & 0 & -1 & \dots & 0 & 0 \\ 
 \vdots & \vdots & \vdots & \ddots & \vdots & \vdots  \\
 0 &  0 & 0 & \dots & -1 & 1 \\ \end{bmatrix},\qquad
N = \begin{bmatrix}
0 & 0 & 0 & \dots & 0 & 0 \\
1 & 0 & 0 & \dots & 0 & 0 \\
0 & 1 & 0 & \dots & 0 & 0 \\
 \vdots & \vdots & \vdots & \ddots & \vdots & \vdots \\
0 & 0 & 0 & \dots & 0 & 0 \\
1 & 0 & 0 & \dots & 1 & 0 \\
\end{bmatrix}. $$
It is straightforward to verify that Assumption~\ref{ass:resolvent}\eqref{ass:res_a}-\eqref{ass:res_c} holds. To verify Assumption~\ref{ass:resolvent}\eqref{ass:res_d}, observe that
\begin{equation}\label{eq:X}
 M^*M+N+N^*-2\Id = \begin{bmatrix}
         -1 & 0 & \dots & 0 & 1 \\ 
          0 & 0 & \dots & 0 & 0 \\
          \vdots & \vdots & \ddots & \vdots & \vdots \\
          0 & 0 & \dots & 0 & 0 \\          
          1 & 0 & \dots & 0 & -1 \\
        \end{bmatrix}\preceq 0.
\end{equation}        
\end{example}

\begin{example}[An extension of Ryu splitting for $n\geq 3$ operators]\label{ex:ryu new}
As explained in \cite[Remark~4.7]{malitsky2021resolvent}, the naïve extension of Ryu splitting to $n>3$ operators fails to converge. However, the more general perspective offered by our framework suggests an alternative.

Indeed, for \eqref{eq:zero} with $n\geq 2$, consider the operator $T\colon\Hilbert^{n-1}\to\Hilbert^{n-1}$ given by
\begin{equation}\label{eq:ryu extension}
T(\bz)
=  \bz +
\gamma\sqrt{\frac{2}{n-1}}\begin{pmatrix}
x_n-x_1 \\
x_n-x_2 \\
\vdots \\
x_{n}-x_{n-1} \\
\end{pmatrix}
\end{equation}
where $x_1,\dots,x_n\in \Hilbert$ are given by
\begin{equation*}
 \begin{cases}
    x_i=J_{F_i}\bigl(\sqrt{\frac{2}{n-1}}\,z_i+\frac{2}{n-1}\displaystyle\sum_{j=1}^{i-1}x_i\bigr) \qquad \forall i\in\{1,\dots,n-1\} \\
    x_n=J_{F_n}\bigl(\frac{2}{n-1}{\displaystyle\sum_{j=1}^{n-1}x_i}-\sqrt{\frac{2}{n-1}}\displaystyle\sum_{i=1}^{n-1}z_i\bigr). 
   \end{cases}
\end{equation*}
This has coefficient matrices $M\in\mathbb{R}^{(n-1)\times n},S\in\mathbb{R}^{n\times(n-1)}$ and $L\in\mathbb{R}^{n\times n}$ are given by
 $$ M = -S^* = \sqrt{\frac{2}{n-1}}\begin{bmatrix}
                -1 &  0 & \dots & 0 & 1 \\
                 0 & -1 & \dots & 0 & 1 \\                
                 \vdots & \vdots & \ddots & \vdots & \vdots  \\
                 0 &  0 & \dots & -1 & 1 \\  
               \end{bmatrix},
               \qquad
    N=\frac{2}{n-1}\begin{bmatrix}
                0 & 0 & \dots & 0 & 0 \\
                1 & 0 & \dots & 0 & 0 \\
                1 & 1 & \dots & 0 & 0 \\
                \vdots & \vdots &\ddots &\vdots & \vdots \\
                1 & 1 & \dots & 1 &  0 \\
                \end{bmatrix}. $$
Note that, when $n=3$, these matrices are precisely those given in \eqref{eq:ryu3} of Example~\ref{ex:ryu3}. Consequently, the setting considered in this example provides a one parameter family that coincides with Ryu splitting in the special case when $n=3$.
                
It is straightforward to verify that Assumption~\ref{ass:resolvent}\eqref{ass:res_a}-\eqref{ass:res_c} holds. To verify Assumption~\ref{ass:resolvent}\eqref{ass:res_d},  observe that
 $$ M^*M + N+N^*-2\Id =\frac{2}{n-1}\begin{bmatrix}
               2-n & 1 & \dots & 1 & 0 \\
               1 & 2-n & \dots & 1 & 0 \\
               \vdots & \vdots & \ddots & \vdots & \vdots  \\
               1 & 1 & \dots & 2-n & 0 \\
               0 & 0 & \dots & 0 & 0 \\
               \end{bmatrix} $$   
This matrix is diagonally dominate with non-positive diagonal entries, hence it is negative definite.
\end{example}

Before our next example, we first recall some notational from graph theory (see, for instance, \cite{chung1996lectures}). Consider an (undirected) graph $G=(V,E)$ with vertex set $V=\{v_1,\dots,v_n\}$ and edge set $E=\{e_1,\dots,e_m\}$. The \emph{degree matrix} of $G$ is the diagonal matrix $D\in\mathbb{R}^{n\times n}$ such that $D_{ii}$ is equal to the degree of $v_i$. The \emph{adjacency matrix} of $G$ is the matrix $A\in\{0,1\}^{n\times n}$ such that $A_{ij}=1$ if only if $v_i$ and $v_j$ are adjacent. The \emph{graph Laplacian} is the matrix $L\in\mathbb{R}^{|V|\times|V|}$ given by $L=D-A$. An \emph{orientation} of an undirected graph is a directed graph obtained by assigning a direction to each edge in the original graph. The \emph{incidence matrix} of a directed graph $G=(V,E)$ is the matrix $B\in\{0,\pm 1\}^{|V|\times|E|}$ given by
\begin{equation}\label{eq:incid}
  B_{ij} = \begin{cases}
              -1 & \text{if }e_j\text{ leaves vertex }v_i, \\
              +1 & \text{if }e_j\text{ enters vertex }v_i, \\
               0 & \text{otherwise.}
            \end{cases} 
\end{equation}            
Let $B_\sigma$ denote an oriented incidence matrix of an undirected graph $G$, that is, an incidence matrix for any orientation of $G$. Then its graph Laplacian can also satisfies $\mathcal{L}=B_\sigma B_\sigma^T.$

\begin{example}[$d$-regular networks]\label{ex:regular}
Consider a simple, connected $d$-regular graph $G=(V,E)$ with ${|V|=n}$. As a consequence of the Handshake Lemma, $G$ necessarily has $|E|=\frac{nd}{2}$ edges. Let $B_\sigma$ be an oriented incidence matrix for $G$. Consider the coefficient matrices given by 
$$-S^*=M=\sqrt{\frac{2}{d}}B^T_\sigma\in\mathbb{R}^{|E|\times|V|}$$
and $N\in\mathbb{R}^{|V|\times |V|}$ being the lower triangular matrix (with zero diagonal) satisfying $N+N^*=\frac{2}{d}A$. We claim that these matrices satisfy Assumptions~\ref{ass:resolvent}. Indeed, we have:
\begin{enumerate}[(a)]
\item By \eqref{eq:incid}, we have $\ker M=\ker B_\sigma^T = \mathbb{R}e$ where $e=(1,\dots,1)^T\in\mathbb{R}^{|E|}$.
\item Since $\sum_{i,j=1}^nA_{ij}=2|E|$, it follows that $\sum_{i,j=1}^nN_{ij}=\frac{1}{d}\sum_{i,j=1}^nA_{ij} = \frac{2|E|}{d} = n$.
\item $S^*=-M$ holds by definition.
\item Since $G$ is $d$-regular, $L=B_\sigma B_\sigma^T=d\Id-A$. Hence 
$$M^*M+N+N^*-2\Id = \frac{2}{d}(d\Id-A) +\frac{2}{d}A-2\Id = 0\preceq 0.$$
\end{enumerate}
Altogether, we have that the operator $T\colon\Hilbert^{|E|}\to\Hilbert^{|E|}$ given by 
\begin{equation}\label{eq:d reg T}
 T(\bz) = \bz +\gamma\bM\bx,\quad \bx=J_{\bF}(\bS\bz+\bN\bx) 
\end{equation}
satisfies the conditions of Section~\ref{s:resolvent splitting}. It is interesting to interpret the variables in the context of the above example. Note that $\bz\in\Hilbert^{|E|}$ and $\bx\in\Hilbert^{|V|}$, so that $\bz$ represents edges of $G$ and $\bx$ represents vertices of $G$. The matrix $M$ describes information flow from vertices to their incident edges. Similarly, the matrix $S$ describes information flow from edges to their adjacent vertices. Finally, $N$ describes a direct information flow between adjacent vertices. 

Since $|E|$ is potentially large, it is often better to avoid using the operator $T$ directly. Using the observation outlined in Remark~\ref{r:implementation} and the fact that $SM=-M^*M=-L$,  \eqref{eq:d reg T} can be rewritten in terms of the operator $\widetilde{T}\colon\Hilbert^{|V|}\to\Hilbert^{|V|}$ given by
\begin{equation}\label{eq:alg regular}
 \widetilde{T}(\bv) = \bv - \gamma\frac{2}{d}\bL\bx,\quad \bx=J_{\bF}(\bv+\bN\bx), 
\end{equation}
This is suited for a distributed implementation with node $i$ responsible for computing $v_i$ and $x_i$. The corresponding iteration for \eqref{eq:alg regular} is given explicitly in \eqref{eq:d reg full} (or \eqref{eq:d reg alg}).

 Note that the setup used in this example does not directly generalise to arbitrary network for two reasons. Firstly, (b) requires the number of edges in $G$ to be exactly $|E|=\frac{nd}{2}$, and secondly, the computation in (d) used the fact that the degree matrix is given by $D=d\Id$.
\end{example}

\section{Numerical Experiments}\label{s:experiments}
In this section, we demonstrate the algorithms presented in Section~\ref{s:realisations} on a toy problem from statistics. Given a finite dataset $C:=\{c_1,\dots,c_n\}\subseteq\mathbb{R}$, we consider finding a point $x\in\mathbb{R}$ which minimises dispersion to $C$ as measured by the $\ell_p$-norm ($p\geq 1$). That is, we consider
\begin{equation}\label{eq:lp}
 \min_{x\in\mathbb{R}}\left(\frac{1}{n}\sum_{i=1}^n|x-c_i|^p\right)^{1/p}.
\end{equation}
The optimal value of \eqref{eq:lp} represents a measure of \emph{dispersion} for the dataset $C$, and solutions of \eqref{eq:lp} represent are measures of \emph{central tendency}. Important examples of measures of dispersion/central tendency pairs within this framework include: \emph{average absolute deviation/median} when $p=1$, \emph{standard deviation/mean} when $p=2$, and \emph{maximum deviation/midrange} when $p=+\infty$ (which is interpreted via the limit for $p\to+\infty$ in~\eqref{eq:lp}).

For the purpose of this demonstration, we focus on the setting where $p=1$, which was previously considered in \cite{malitsky2021resolvent}. In this case, the sub-differential sum rule \cite[Corollary~16.48]{bauschke2017convex} implies that solutions to~\eqref{eq:lp} can be characterised by the monotone inclusion
$$ 0 \in \sum_{i=1}^nA_i(x)\subseteq\mathbb{R}\text{~~where~~}A_i:=\partial|\cdot-c_i|. $$
In all our experiments, problem instances are obtained by generating a vector $c\in\mathbb{R}^n$ entrywise through sampling the standard normal distribution.

\subsection{Splitting algorithms with minimal lifting}
Our first experiment is a comparison between frugal resolvent splitting algorithms for \eqref{eq:mono} with \emph{minimal lifting}. That is, algorithms for which the underlying fixed point operator is defined on the space~$\Hilbert^{n-1}$. For further details on minimal lifting, the reader is referred to \cite{ryu2020uniqueness,malitsky2021resolvent}. In particular, we compare the ``resolvent splitting with minimal lifting'' due to Malitsky \& Tam \cite{malitsky2021resolvent} (see also Example~\ref{ex:minimal}) with our proposed extension of Ryu's splitting algorithm~\cite{ryu2020uniqueness,aragon2021strengthened} from Example~\ref{ex:ryu new}.  

Figure~\ref{f:ex11} shows the effect of changing the parameter $\gamma\in(0,1)$ on the decay of the fixed point residual, as measured by the quantity $\|(\Id-T)(\bz^k)\|/\gamma$ where $(\bz^k)$ denotes sequence generated by the algorithm, for a problem with $n=10$. In both cases, the initial point $\bz^0$ was taken as the vector of all zeros. These results suggest that both algorithms perform better with larger values of~$\gamma$ for this scenario.

\begin{figure}[!htb]
\centering
\includegraphics[width=0.47\textwidth]{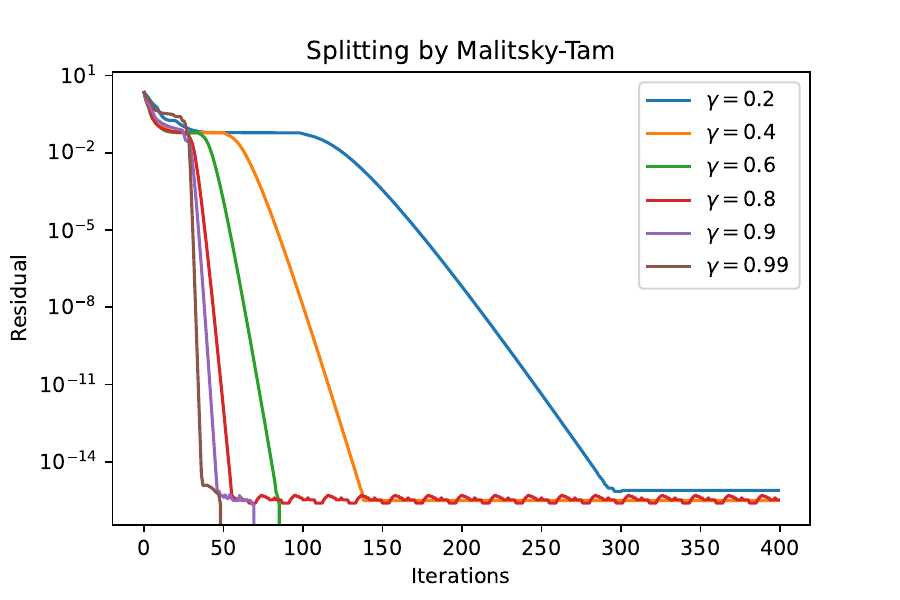}
\includegraphics[width=0.47\textwidth]{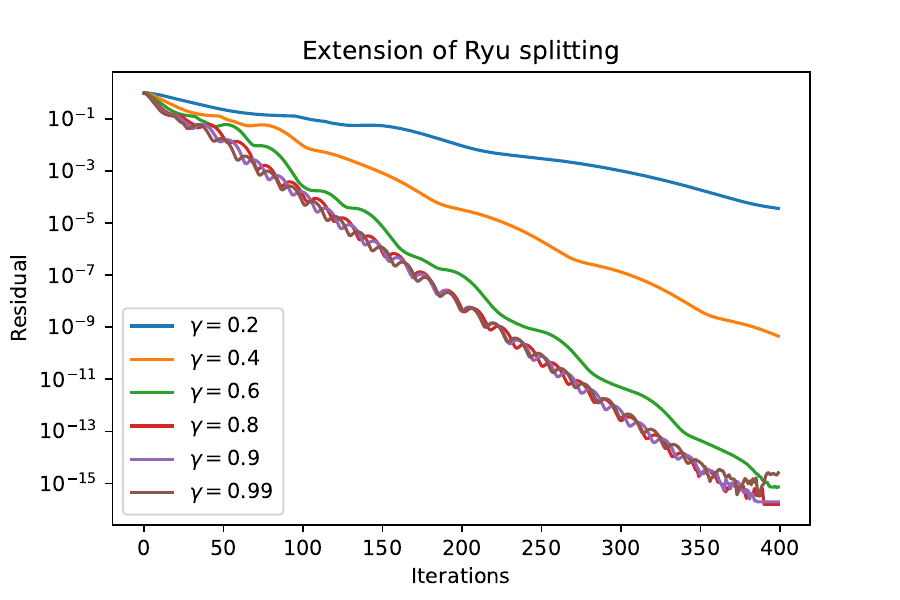}
\caption{The effect of varying the parameter $\gamma\in(0,1)$ for the splitting algorithm by Malitsky--Tam (left) and the proposed extension of Ryu's splitting algorithm (right).\label{f:ex11}}
\end{figure}

Figure~\ref{f:ex12} shows results for datasets of size $n=100$ and $n=250$. For both algorithms, $\gamma=0.99$ was used as was best in Figure~\ref{f:ex11}. Figure~\ref{f:ex12} suggest that the splitting algorithm by Malitsky--Tam converges faster,  although the proposed extension of Ryu's algorithm is better when only a relatively fewer number of iteration can be computed.

\begin{figure}[!htb]
\centering
\includegraphics[width=0.47\textwidth]{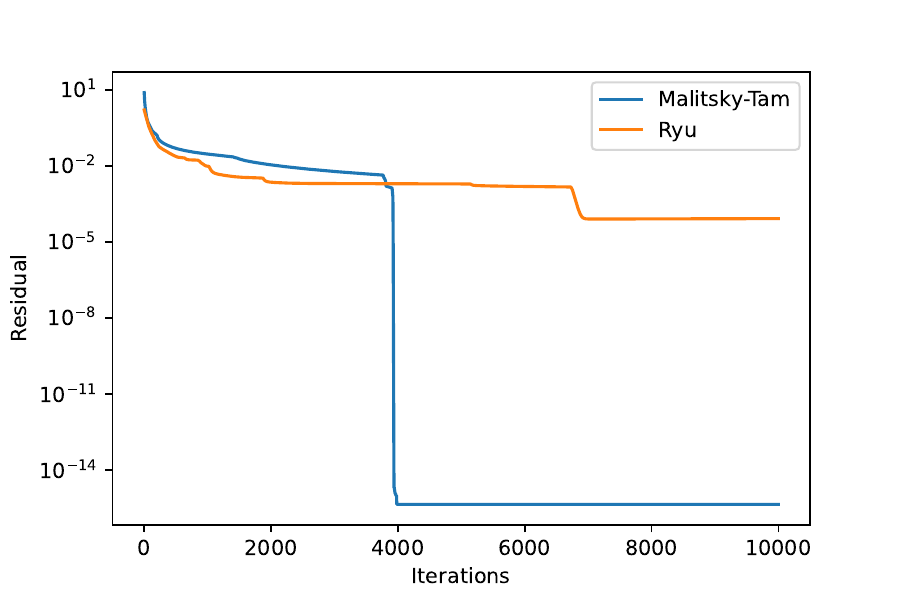}
\includegraphics[width=0.47\textwidth]{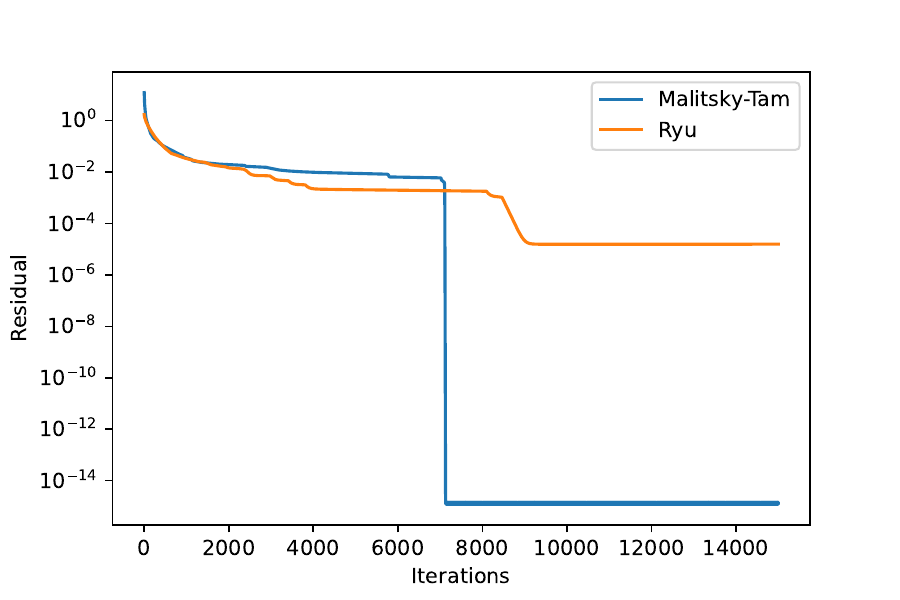}
\caption{Results with $\gamma=0.99$ for problems with $n=100$ (left) and $n=250$ (right).\label{f:ex12}} 
\end{figure}

\subsection{Distributed splitting on regular networks}
Our second experiment is a comparison between decentralised resolvent splitting algorithms for \eqref{eq:mono} on $d$-regular networks. In particular, we compare the algorithm proposed in Example~\ref{ex:regular} with \emph{PDHG}~\cite{chambolle2011first,condat2013primal} (as discussed in \cite[Section~5.1]{malitsky2021resolvent}) and \emph{P-EXTRA}~\cite{shi2015proximal}. Our experiments are performed using four $d$-regular networks with $n=11$ vertices and $d\in\{2,4,6,8\}$, as shown in Figure~\ref{f:networks}.

\begin{figure}[!htb]
\centering
\includegraphics[width=0.6\textwidth]{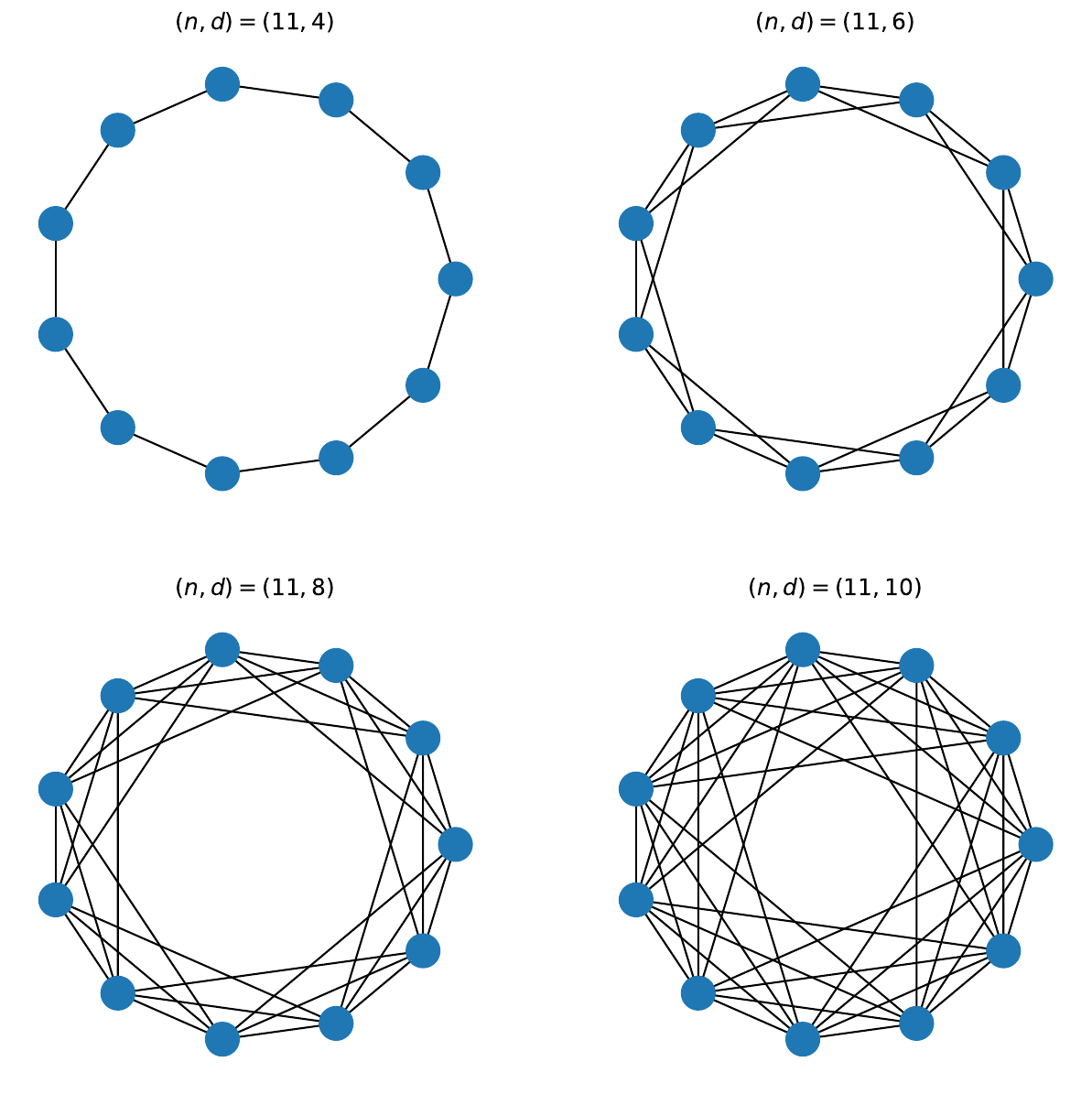}
\caption{The four regular networks used in the experiments.\label{f:networks}}
\end{figure}

Parameters in the proposed method and PDHG where chosen by trial and error for best performance; for the proposed method, $\gamma=0.5$ was used, and for the PDHG, $(\tau,\sigma)=(\frac{1}{10\|L\|^{1/2}},\frac{10}{\|L\|^{1/2}})$ was used. For P-EXTRA, the mixing matrix $W$ as taken to be \emph{Laplacian-based constant edge weight matrix}, as suggested in \cite[Section~2.4]{shi2015extra}. 

Figure~\ref{f:dis res} reports the distance of the current iterate to the solution for the three methods, on each of the networks in Figure~\ref{f:networks}. The figure suggests favourable performance of the proposed method. However, this might be expected as the proposed method is more specialised --- it can only be used on regular networks, whereas the other methods can be applied can used on arbitrary connected networks. Overall, all methods were observed to converge faster as the network connectivity increased, as expected.

\begin{figure}[!htb]
\centering
\includegraphics[width=0.47\textwidth]{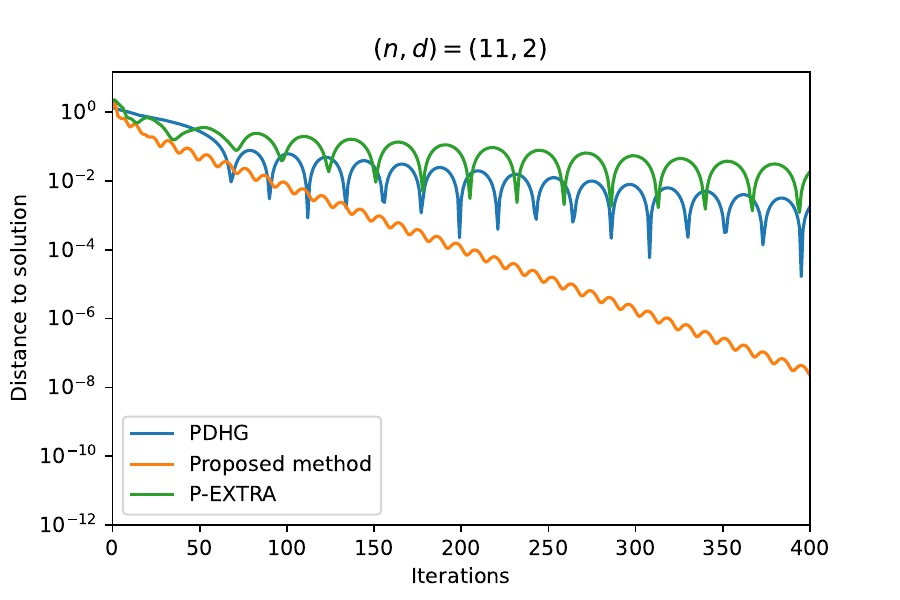}
\includegraphics[width=0.47\textwidth]{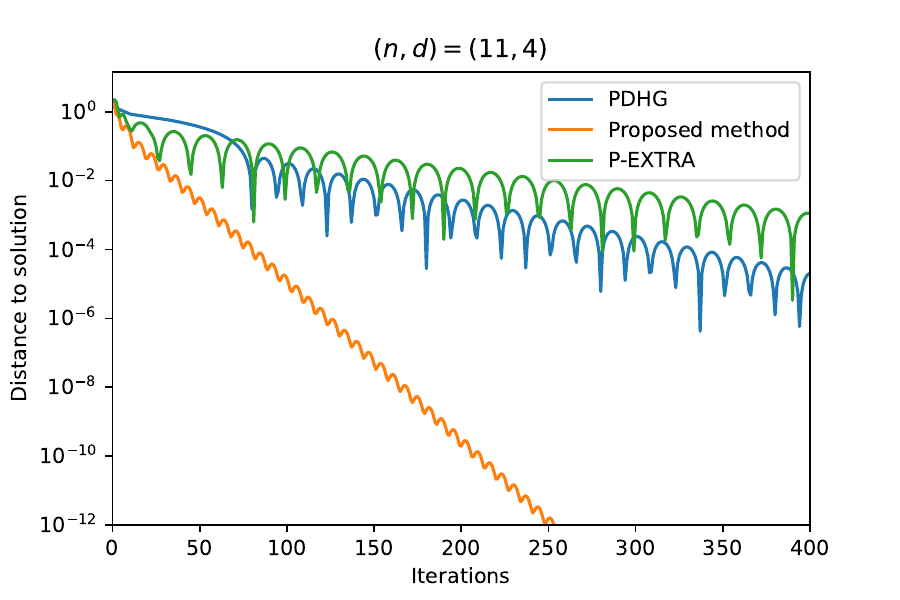}
\includegraphics[width=0.47\textwidth]{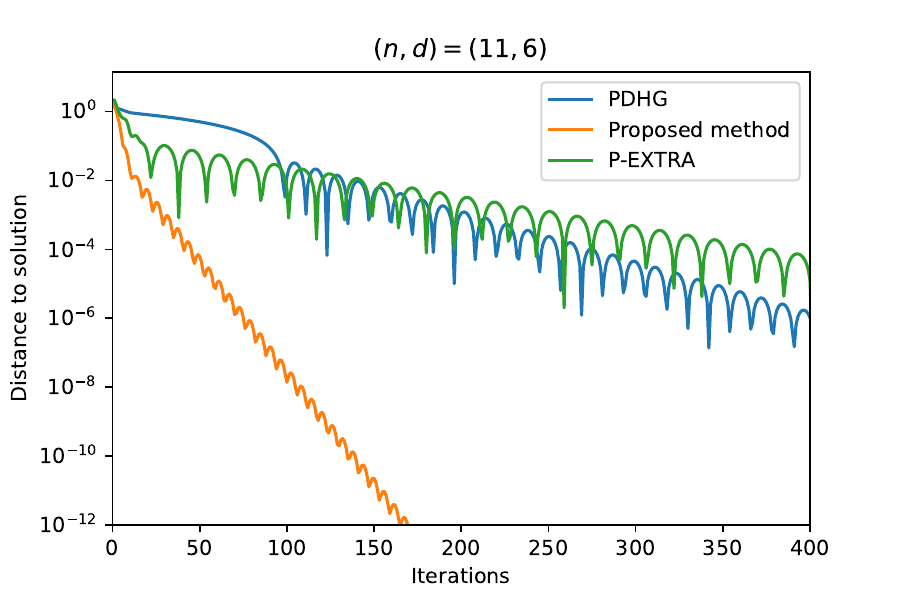}
\includegraphics[width=0.47\textwidth]{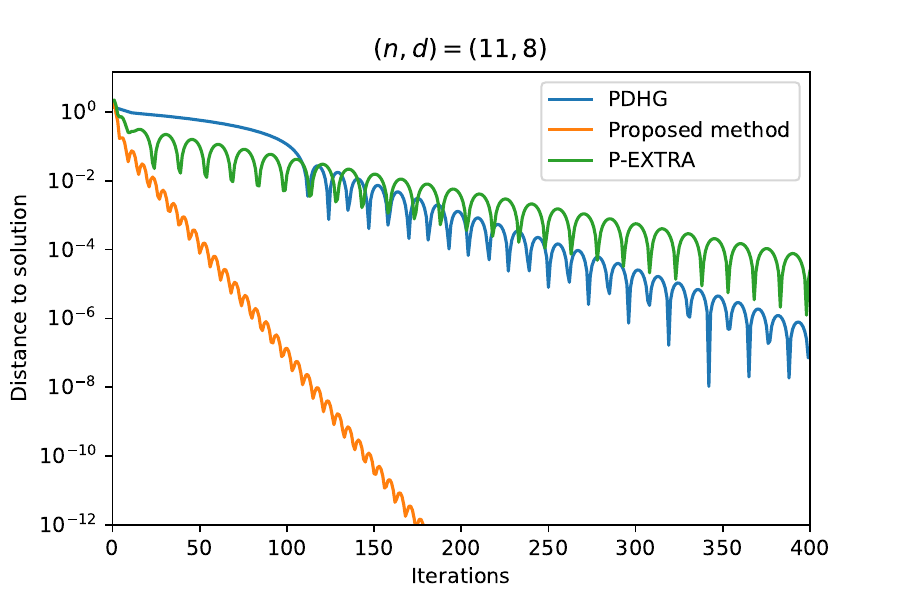}
\caption{Results for the four regular networks shown in Figure~\ref{f:networks}.\label{f:dis res}} 
\end{figure}

\section*{Acknowledgements}
The author would like to thank the associate editor, anonymous referee and Yura Malitsky for their comments which improved the manuscript. MKT is supported in part by Australian Research Council grants DE200100063 and DP230101749.

\bibliographystyle{abbrv} 
\bibliography{biblio}

\end{document}